\newtheorem{theorem}{Theorem}[section]
\theoremstyle{definition}
\numberwithin{equation}{section}
\renewcommand{\d}{\mathrm{d}}
\newcommand{\D}{\mathrm{D}}
\newcommand{\Rz}{{\mathbb R}}
\newcommand{\disp}{\displaystyle}
\newcommand{\haz}{\widehat}
\newcommand{\ove}{\overline}
\newcommand{\weakto}{\rightharpoonup}
\newcommand{\lan}{\langle}
\newcommand{\ran}{\rangle}
\begin{document}

\title[Minimizing movements for GENERIC]{A minimizing-movements
  approach \\ to GENERIC systems}
\author[A. J\"ungel]{Ansgar J\"ungel}
\address[Ansgar J\"ungel]{Institute for Analysis and Scientific Computing, Vienna University of Technology, Wiedner
Hauptstra\ss e 8-10, 1040 Wien, Austria}
\email{juengel@tuwien.ac.at}
\urladdr{http://www.asc.tuwien.ac.at/$\sim$juengel}
\author[U. Stefanelli]{Ulisse Stefanelli} 
\address[Ulisse Stefanelli]{Faculty of Mathematics, University of
  Vienna, Oskar-Morgenstern-Platz 1, A-1090 Vienna, Austria,
Vienna Research Platform on Accelerating
  Photoreaction Discovery, University of Vienna, W\"ahringerstra\ss e 17, 1090 Wien, Austria,
 \& Istituto di
  Matematica Applicata e Tecnologie Informatiche {\it E. Magenes}, via
  Ferrata 1, I-27100 Pavia, Italy
}
\email{ulisse.stefanelli@univie.ac.at}
\urladdr{http://www.mat.univie.ac.at/$\sim$stefanelli}
\author[L. Trussardi]{Lara Trussardi}
\address[Lara Trussardi]{Faculty of Mathematics, University of Vienna, 
Oskar-Morgenstern-Platz 1, 1090 Wien, Austria.}
\email{lara.trussardi@univie.ac.at}
\urladdr{http://www.mat.univie.ac.at/$\sim$trussardi}

\date{\today}

 \subjclass[2010]{80M30, 
65L20. 
}
 \keywords{Damped harmonic oscillator, structure-preserving
   time discretization, GENERIC system}

 \begin{abstract}
We present a new time discretization scheme adapted to the
structure of GENERIC systems. The scheme is variational in nature and is based on a
{\it conditional} incremental minimization. The GENERIC structure of
the scheme provides stability and convergence of the scheme. 
We prove that the scheme can
be rigorously implemented in the case of the damped harmonic oscillator. Numerical
evidence is collected, illustrating the performance of the method.
 \end{abstract}

\maketitle


\section{Introduction} 

The aim of this note is to discuss a new variational time-discretization scheme
adapted to the structure of General Equations for Non-Equilibrium
Reversible-Irreversible Coupling (GENERIC). Introduced by Grmela \& \"Ottinger,
this formulation provides a unified frame for describing the time evolution of physical systems out of equilibrium in
presence of reversible and irreversible dynamics \cite{Oettinger}.

Let $y$ denote the {\it state} of a closed, nonequilibrium physical system
and let $E(y)$ and $S(y)$ be
the corresponding total {\it energy} and total {\it entropy}, respectively. The GENERIC formulation of the time
evolution of the system reads
\begin{equation}
  \label{gen0}
  y' = L(y) \,\D E(y) +K(y)  \, \D  S (y).
\end{equation}
Here, $y'$ denotes the time derivative, $L$ is the antisymmetric {\it
  Poisson} operator, and $K$ is the symmetric and positive definite
{\it Onsager} operator (details in Section \ref{sec:gen}). 

The gist of the GENERIC system \eqref{gen0} is that conservative and
dissipative dynamics are clearly separated. Under a specific compatibility condition, see
\eqref{nic} below, this entails that a solution $t\mapsto y(t) $ to
\eqref{gen0} conserves energy and accumulates entropy in a  
quantifiable way, namely
$$ \frac{\d}{\d t} E(y) = 0 \quad \text{and} \quad \frac{\d}{\d t}
S(y) =\lan \D S(y) , K(y) \D S\ran\geq 0.$$
 The conservation of energy and the quantified dissipative
character  are the distinguishing traits of the GENERIC
system \eqref{gen0}.
To replicate these properties at the discrete level leads to
so-called {\it structure-preserving} approximations. These have
drawn interest in the last years, giving rise to different
numerical solutions adapted to different applicative
contexts.

In the last years, GENERIC 
has attracted increasing
attention and has been applied to a number of situations ranging from
complex fluids \cite{Grmela}, to dissipative quantum mechanics \cite{Mielke13}, to
thermomechanics \cite{Betsch,Huetter,Huetter2,Huetter3,Mielke11}, to
electromagnetism \cite{Jelic}, to shape-memory alloys \cite{generic_souza},
to the Vlasov-Fokker-Planck \cite{Duong,Hoyuelos,Peletier} and the
Boltzmann equation \cite{Montefusco}, and to large-deviation limits of reversible stochastic processes \cite{Kraaij,Kraaij2}.

Numerical schemes conserving energy
can be found for instance in
\cite{Gonzales,QuMc08,Romero,Romero3,Romero2,SMSF15}; see
\cite{CGMMOOQ12} for a review and \cite{Suzuki} for a contribution
explicitly focusing on GENERIC formulations. These schemes are often {\it discrete-gradient}
methods, where gradients of functionals are specifically modified in
order to fulfill a discrete chain rule and exactly replicate
conservation \cite{Gonzales,Gonzales2,HLW06,MQR99}. In the thermomechanical
context, structure-preserving discretizations either in terms of the absolute
temperature \cite{Conde,Conde2,Garcia,Portillo} or of the  internal energy
or the entropy \cite{Betsch,Betsch2} have been obtained. The reader is
referred to 
\cite{Krueger} for an approach to open systems. 

GENERIC integrators
able to conserve energy and accurately describe entropy accumulation 
have been proposed in 
\cite{Oettinger2}, where however some limitations are also mentioned. In particular, energy and Onsager
operators have to be suitably modified in a time-step dependent manner in order energy conservation to
hold. Integrators are actually constructed in case of single dissipation
mechanism only (were $K$ be a matrix, it would have to
have rank one) and no convergence theory is provided. The discretization of the
damped harmonic oscillator is addressed in \cite{Oettinger2}, where
nonetheless the temperature is given by a prescribed heat bath. In this case,
explicit solutions have to be used in order to specify the GENERIC integrator.

To the best of our knowledge, all available
numerical schemes directly target GENERIC systems in their
differential form~\eqref{gen0}. Our focus is here on a time-discretization of variational nature instead,
fitting into the general scheme of so-called {\it minimizing
movements} \cite{Ambrosio95,Ambrosio08}. In particular, at all discrete steps we
aim at solving a specific minimization problem. We start by defining
the entropy-production potential
$\Psi(y,\xi) = \lan\xi, K(y)\xi \ran/2$ and denoting by
$\Psi^*(y,\cdot)$ its conjugate in the second variable. Then,  
the GENERIC relation \eqref{gen0} in $[0,T]$ can be equivalently rewritten in
terms of the scalar equation
\begin{equation}-S(y(t)) + \int_0^t \Psi^*(y,y'{-}L(y) \, \D E(y)) + \int_0^t
\Psi(y,\D S(y)) +S(y(0))=0\label{gen01}
\end{equation}
for all $t\in [0,T]$. The
reformulation of dissipative systems in terms of scalar equations
as \eqref{gen01} is usually referred to as {\it De Giorgi's Energy-Dissipation
  Principle} \cite{Dondl,Liero,Mielke20}. This principle has already
been applied in a variety of different contexts, including generalized
gradient flows \cite{Bacho,Rossi08}, curves
of maximal slope in metric spaces \cite{Ambrosio08,DeGiorgi}, rate-independent systems
\cite{Mielke12,Roche}, and optimal control \cite{portinale}. For
GENERIC systems, the reformulation of \eqref{gen0} in terms of such
 variational principle has been already presented in \cite{Peletier}.

Our new minimizing-movements approach to \eqref{gen0} consists in tackling a
discrete version of relation \eqref{gen01}.
Assume to be
given a time partition $0=t_0<t_1 < \dots<t_N=T$ with steps $\tau_i  =
t_i - t_{i-1}$, and an initial state $y^0$. We define the time-discrete trajectory $\{ y_i \}_{i=0}^N$ by letting $y_0=y^0$ and
subsequently perform the minimization  
$$\min_y\left\{ -S(y) +\tau_i \Psi^*\left(y, \frac{y- y_{i-1}}{\tau_i} -
      L(y) \, \D E(y)\right) + \tau_i \Psi(y,\D S(y)) +
    S( y_{i-1})\right\}$$
for $ i=1,\dots, N$. The latter is nothing but a localized and
discretized version of the De Giorgi's Energy-Dissipation Principle
\eqref{gen01}. 

The minimizing-movements scheme above has been
introduced in \cite{generic_euler}. The
theory in \cite{generic_euler} is however restricted to the case of  state-independent
operators $L$ and $K$, which severely limits the applicability to real GENERIC systems. In addition, the convergence analysis in
\cite{generic_euler} relies on a suitable set of a priori assumptions,
leaving open the discussion whether these can be met in practice.

The aim
of this note is then threefold. First, we extend the reach of the
numerical method  to include the case of state dependent operators $L(y)$ and
$K(y)$, hence covering the full extent of the GENERIC theory (Section
\ref{sec:gen}). Our main result is the conditional convergence of
Theorem \ref{thm:cond}. Second, we
provide the detailed analysis of a specific case, the damped
harmonic oscillator, in which the above-mentioned convergence assumptions can
actually be proved to hold (Section \ref{sec:oscillator}). Eventually, we present numerical
experiments assessing the performance of the minimizing-movements scheme and
compare it to the classical
implicit Euler scheme (Subsection \ref{sec:num}).


\section{The minimizing-movements  scheme for GENERIC
  systems} \label{sec:gen}

In this section, we recall the structure of a GENERIC system \cite{Oettinger} by
specifying the assumptions on functionals and operators that will be
used throughout. Moreover, we formulate our minimizing-movements  scheme
and present a conditional convergence result, namely Theorem \ref{thm:cond}.

The GENERIC system 
\begin{equation}
  \label{gen}
  y' = L(y) \,\D E(y) +K(y)  \, \partial  S (y) \quad \text{a.e. in} \ [0,T]
\end{equation} 
is defined by specifying the quintuple
$(Y,E,S,L,K)$. In the following, the state space $Y$ is assumed to be a reflexive Banach 
space. The functionals $E$ and $S$ represent the total energy and the total entropy of the
system, respectively. We assume $E$ to be Fr\'echet differentiable, with
strongly-weakly continuous differential $\D E$, and $-S:Y \to
(-\infty,\infty]$ to be proper and lower semicontinuous
with single-valued and strongly-weakly continuous Fr\'echet subdifferential $\partial(-S)$. Recall 
that one has $\xi \in \partial(-S)(y)$ iff $ S(y)>-\infty $ and 
$$ \liminf_{x\to y}\frac{S(y)-S(x) - \lan \xi , x-y \ran}{\| x-y
  \|_Y}\geq 0.$$ 
In the following, we also make use of the obvious notation  $-\partial S
= \partial (-S)$.

The operators $L$ and $K $ define a  {\it
  Poisson} and an {\it Onsager} geometric structure on $Y$,
respectively. In particular, for all
states $y\in Y$ we assume that 
$L(y)$ and $K(y)$ are linear and continuous from $Y^*$ to
$Y$. Moreover, $L(y)$ is required to be antisymmetric, $L^*(y)=-L(y)$, and to fulfill the Jacobi
identity $\{ \{g_1,g_2\},g_3 \}+\{ \{g_2,g_3\},g_1 \}+\{
\{g_3,g_1\},g_2 \}=0$. Here, $g_i$ denotes any differentiable function on $Y$
and the Poisson bracket is defined as $\{g,\tilde g\}(y)=\lan \D g(y),
L(y)\D \tilde g(y)\ran $, where $\lan \cdot,\cdot \ran$ denotes the
duality pairing
between $Y^*$ (dual) and $Y$. Moreover, we assume the strong-weak
continuity
\begin{equation}
y_n
\stackrel{Y}{\to} y \ \ \text{and} \ \ \xi_n \stackrel{Y^*}{\weakto}
\xi \ \ \Rightarrow \ \ L(y_n)\xi_n \stackrel{Y}{\weakto} L(y)\xi.\label{eq:contL}
\end{equation} 

The mapping $K(y)$ is asked to be symmetric
and positive definite, namely $ K(y)=K^*(y)\geq 0$. We
associate to $K$ the so-called {\it entropy-production potential} $\Psi: Y \times Y^* \to [0,\infty)$ given
by $\Psi(y,\xi) = \lan \xi, K(y)\xi\ran/2$ and let $\Psi^*$
 be its conjugate in the second variable, namely $\Psi^*(y,\eta) = \sup_{\xi}(\lan \xi,
\eta \ran  - \Psi(y,\xi))$. We also assume the lower semicontinuity of the sum of
the entropy-production potential and its dual,  that is
\begin{align}&
  y_n
\stackrel{Y}{\to} y, \ \  \eta_n \stackrel{Y}{\weakto}
\eta, \ \ \text{and} \ \ \xi_n \stackrel{Y^*}{\weakto}
\xi \nonumber \\
&\quad \Rightarrow \ \ \liminf_{n \to\infty} \left(\Psi^*(y_n,\eta_n)+
 \Psi(y_n,\xi_n)\right) \geq \Psi^*(y,\eta) + \Psi(y,\xi). \label{eq:contK}
\end{align}

In addition, functionals and operators are asked to satisfy the crucial
{\it noninteraction} condition 
\begin{equation}
  \label{nic}
  L^*(y)\, \partial S(y) = K^*(y)\, \D E(y) = 0 \quad \forall y \in Y.
\end{equation}
This condition ensures that the system of dissipative
actions $K(y)\,\partial S(y)$ does not spoil energy conservation and the system of conservative actions $L(y)\,\D E(y)$
does not contribute to dissipation. Indeed, by
assuming sufficient smoothness one checks that 
\begin{align}
\frac{\d}{\d t} E(y) &= \lan \D E(y), y' \ran \stackrel{\eqref{gen}}{
  =}\lan \D E(y), L(y) \D E(y) \ran + \lan \D E(y), K(y) \partial S(y)  \ran
   \nonumber \\
&= 0 +  \lan  \partial S(y), K^*(y)\D E(y)  \ran\stackrel{\eqref{nic}}{
  =} 0, \nonumber \\ 
\frac{\d}{\d t} S(y) &= \lan \partial S(y), y' \ran \stackrel{\eqref{gen}}{
  =}\lan \partial S(y), L(y) \D E(y) \ran + \lan \partial S(y), K(y) \partial S(y)  \ran \nonumber
  \\
&= \lan \D E(y)  , L^*(y)\partial S(y)\ran + \lan \partial S(y), K(y) \partial S(y)  \ran \nonumber\\
&\stackrel{\eqref{nic}}{
  = } \lan \partial S(y), K(y) \partial S(y)  \ran  \geq 0.\label{S}
\end{align}
The noninteraction condition \eqref{nic} hence implies that trajectories $y$ solving
\eqref{gen} have constant total energy and entropy rate $\lan \partial
S(y), K(y) \partial S(y)  \ran $. In particular, the entropy is
nondecreasing and entropy production results solely from irreversible processes.
In computing \eqref{S}
 we have used the chain rule $(\d/\d t)S(y) = \lan \partial
S(y),y'\ran$ almost everywhere. This is classical in case $-S$ is (a
regular perturbation of) a convex functional  \cite[Prop. 3.3,
p. 73]{Brezis73}. The reader is referred to \cite{Rossi-Savare06} for
a general discussion out of the convex case.

Before moving on, let us remark that the structure of GENERIC is
geometric in nature. Indeed, it is invariant by coordinate changes. Let $y = \phi(\tilde y)$ for
$\tilde y \in \tilde Y$ and define
$\tilde E(\tilde y) =  E(\phi( \tilde y))$, $-\tilde S(\tilde y) =
-S(\phi(\tilde y))$, $\tilde L(\tilde y) = \D \phi(\tilde y)^{-1}
L(\phi(\tilde y)) \D \phi(\tilde y)^{-*}$, and $\tilde K(\tilde y) = \D \phi(\tilde y)^{-1}
K(\phi(\tilde y)) \D \phi(\tilde y)^{-*}$, where $\D \phi(\tilde y) ^{-*}:
\tilde Y^* \to Y^*$ is the adjoint of
the inverse of $\D \phi(\tilde y) :  \tilde Y \to
Y$. Then, the quintuple $(\tilde Y, \tilde E, \tilde
S, \tilde L,\tilde K)$ satisfies the above structural assumptions and
the GENERIC structure \eqref{gen} can be rewritten as $\tilde y' = \tilde L( \tilde y)\, \D \tilde
E(\tilde y) + \tilde K (\tilde y)\, \partial \tilde S(\tilde y)$.

We now reconsider the discussion leading to \eqref{gen01} and specify it
further by remarking that relation \eqref{gen} is actually
equivalent to the {\it inequality}
 \begin{align}&-S(y(t)) + \int_0^t \Psi^*\big(y,y'{-}L(y) \, \D
   E(y)\big) + \int_0^t
\Psi\big(y,\partial S(y)\big)+S(y(0))\leq 0 \label{gen1}
\end{align}
for all $t \in [0,T]$.
The equivalence between \eqref{gen} and \eqref{gen1} follows from
Fenchel's relations. Applied to the entropy-production potential
$\Psi$, these relations  read  
\begin{align}
  \Psi^*(y,\eta) + \Psi(y,\xi) \geq \lan \xi,\eta \ran \quad \forall
  y,\,\eta \in Y, \ \xi \in Y^*,
  \label{eq:fenchel}\\
\Psi^*(y,\eta) + \Psi(y,\xi) = \lan \xi,\eta \ran\quad \Leftrightarrow \quad
\xi\in \partial \Psi(y,\eta),
  \label{eq:fenchel2}
\end{align}
where the subdifferential is taken in the second variable only.
By noting that $\partial \Psi(y,\partial S(y)) = K(y)\, \partial S(y)$ and using \eqref{eq:fenchel}-\eqref{eq:fenchel2}, one can prove the
equivalences 
\begin{align*}
  &\eqref{gen}  \quad \Leftrightarrow   \ \  y' - L(y)\, \D E(y)
  = \partial \Psi(y,\partial S(y)) \ \ \text{a.e.}\quad
  \nonumber\\
&\quad\stackrel{\eqref{eq:fenchel2}}{\Leftrightarrow} 
  \Psi^*(y,y'{-}L(y)\, \D E(y))+ \Psi(y,\partial S(y)) - \lan y'{-}L(y)\, \D
  E(y), \partial S(y)\ran \leq 0 \ \ \text{a.e.}\quad
  \nonumber\\
&\quad\stackrel{\eqref{nic}}{\Leftrightarrow}  \Psi^*(y,y'{-}L(y)\, \D E(y))+ \Psi(y,\partial
  S(y)) - \frac{\d}{\d t} S(y) \leq 0  \ \ \text{a.e.} \nonumber\\
&\quad \stackrel{\eqref{eq:fenchel}}{\Leftrightarrow} \quad \eqref{gen1}.
\end{align*}
In particular, the last left-to-right implication follows by
integration while the right-to-left counterpart from the nonnegativity
of the integrand, given \eqref{nic} and \eqref{eq:fenchel}.

The minimizing-movements scheme corresponds to a discretization of
inequality \eqref{gen1}.
To each time partition $0=t_0<t_1<\dots< t_{N}=T$, we associate the
time steps $\tau_i = t_i - t_{i-1}$ and the diameter $\tau = \max
\tau_i$. Given the vector $\{y_i\}_{i=0}^N\in Y^{N+1}$, we introduce the
backward piecewise
constant and piecewise linear interpolations $\ove y:[0,T] \to Y$ and
$\haz y:[0,T] \to Y$,
\begin{align*}
&\ove y(0)= \haz y(0), \ \ove y(t)=y_i, \ \haz y(t) =
\frac{t-t_{i-t}}{\tau_i}y_{i-1}+ \frac{t_i - t}{\tau_i}y_i,\\
&\qquad
\forall t \in (t_{i-1},t_i], \ i =1, \dots, N.
\end{align*}

We define the {\it incremental} functional by
$G :  (0,\infty) \times Y\times Y \to (-\infty,\infty]$ as
$$G(\tau,\eta; y) = -S(y) +\tau \Psi^*\left(y, \frac{y-\eta}{\tau} -
      L(y) \, \D E(y)\right) + \tau \Psi(y,\partial S(y)) +
    S(\eta).$$
By letting $y_0= y^0$ we find the discrete solution
 $\{y_i\}_{i=0}^{N}$ by subsequently solving the minimization
problem
\begin{equation}\min_y G(\tau_i,y_{i-1};y ) \quad \text{for} \  i =1, \dots,
N.\label{eq:min}
\end{equation}

Note that, for all $\tau>0$ and $y_{i-1}\in Y$ with $S(y_{i-1})>-\infty$,
the map $y \mapsto G(\tau_i,y_{i-1};y)$ is strongly lower
semicontinuous because of the lower semicontinuity of $-S$, the lower
semicontinuity \eqref{eq:contK} of $\Psi^*+\Psi$, the weak-strong continuity of $\D E $ and $\partial
S$, and the continuity \eqref{eq:contL} of $L$. In order to solve
problem \eqref{eq:min} one has hence to check that $y \mapsto G(\tau_i,y_{i-1};y)$ is strongly
coercive.

The main result of this section is the following {\it conditional} convergence result.

\begin{theorem}[Conditional convergence]\label{thm:cond} Under the above assumptions, let a sequence of partitions $0=t_0^n<\dots<t_{N^n}^n=T$ be
  given with $ \tau^n\to 0$ as $n \to \infty$, and let $\{y_i^n\}_{i=1}^{N^n}$ be such that $y_0^n = y^0$. Assume that 
  \begin{align}
    &\sum_{i=1}^{N^n}\big(G(\tau_i^n , y_{i-1}^n; y_i^n)\big)^+ \to 0 \ \
    \text{as} \ n \to \infty, \label{eq:G}\\
& \{\haz y^n\} \ \text{is bounded in} \ H^1(0,T;Y)  \ \text{and
  takes values in} \  K \subset\subset Y,\label{eq:G2}\\
&\{L(\ove y^n)\,\D E(\ove y^n)\} \ \text{is bounded in} \ L^2(0,T;Y), \label{eq:G3}\\
&\{\partial S(\ove y^n)\} \ \text{is bounded in} \ L^2(0,T;Y^*). \label{eq:G4}
  \end{align}
Then, up to a subsequence, $\haz y^n \weakto y$ in $H^1(0,T;H)$, where
$y$ is a solution of the \emph{GENERIC} system \eqref{gen} in the sense of inequality  \eqref{gen1}.
\end{theorem}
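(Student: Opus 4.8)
The plan is to read the summed incremental functional as a Riemann-type discretization of the energy-dissipation functional appearing on the left-hand side of \eqref{gen1}, and then to pass to the limit by combining an a~priori upper bound (from \eqref{eq:G}) with compactness and a lower-semicontinuity argument. First I would exploit the telescoping of the boundary entropy terms. Summing $G(\tau_i^n,y_{i-1}^n;y_i^n)$ over $i=1,\dots,m$ and inserting the definitions of $\ove y^n$ and $\haz y^n$, the contributions $-S(y_i^n)+S(y_{i-1}^n)$ cancel pairwise and the remaining terms assemble into integrals, yielding the exact discrete identity
\begin{align*}
\sum_{i=1}^{m} G(\tau_i^n, y_{i-1}^n; y_i^n) = {}&{-}S(\ove y^n(t_m^n)) + S(y^0) \\
&{}+ \int_0^{t_m^n} \Psi^*\big(\ove y^n, \partial_t \haz y^n - L(\ove y^n)\,\D E(\ove y^n)\big) + \int_0^{t_m^n} \Psi\big(\ove y^n, \partial S(\ove y^n)\big),
\end{align*}
which is precisely the discrete analogue of the left-hand side of \eqref{gen1} at $t_m^n$. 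Since $(G)^+\ge 0$, dropping the tail terms gives $\sum_{i=1}^{m}G \le \sum_{i=1}^{m}(G)^+ \le \sum_{i=1}^{N^n}(G)^+ \to 0$, so that $\limsup_n \sum_{i=1}^{m(n)} G \le 0$ for any choice of indices $m(n)$.

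Second, I would set up compactness. Assumption \eqref{eq:G2} yields, along a subsequence, $\haz y^n \weakto y$ in $H^1(0,T;Y)$; the compact-range condition together with the derivative bound upgrades this to strong convergence $\haz y^n \to y$ in $C([0,T];Y)$, and since $\|\ove y^n - \haz y^n\|_Y$ is of order $\tau^n\to 0$ one also gets $\ove y^n \to y$ strongly and pointwise a.e. From \eqref{eq:G3}--\eqref{eq:G4} I extract weak $L^2$ limits of $L(\ove y^n)\D E(\ove y^n)$ and $\partial S(\ove y^n)$. The continuity hypotheses enter in identifying these limits: the strong convergence $\ove y^n \to y$ with the strong-weak continuity of $\D E$ and $\partial S$ gives $\D E(\ove y^n)\weakto \D E(y)$ and $\partial S(\ove y^n)\weakto \partial S(y)$ pointwise, whence by \eqref{eq:contL} also $L(\ove y^n)\D E(\ove y^n)\weakto L(y)\D E(y)$ pointwise; matching these pointwise weak limits against the $L^2$-weak limits, using the uniform bounds and a Vitali-type argument, identifies the latter as $L(y)\D E(y)$ and $\partial S(y)$. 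In particular $\partial_t\haz y^n - L(\ove y^n)\D E(\ove y^n)\weakto y'-L(y)\D E(y)$ in $L^2(0,T;Y)$.

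The crux is then the lower-semicontinuity passage in the two integral terms. Here the $y$-slot converges strongly while the velocity-type arguments converge only weakly, so the pointwise hypothesis \eqref{eq:contK} plays exactly the role of the integrand lower semicontinuity; since $\Psi(y,\cdot)$ and $\Psi^*(y,\cdot)$ are convex in their second argument, a standard lower-semicontinuity theorem for integral functionals of Ioffe type applies and gives
$$\liminf_n \int_0^{t}\big(\Psi^*(\ove y^n,\cdot)+\Psi(\ove y^n,\partial S(\ove y^n))\big) \geq \int_0^t\big(\Psi^*(y, y'{-}L(y)\D E(y)) + \Psi(y,\partial S(y))\big).$$
Combining this with the lower semicontinuity of $-S$ at the boundary term $-S(\ove y^n(t_m^n))\ge$-$\,S(y(t))$ and taking the $\liminf$ of the discrete identity, the $\limsup\le 0$ bound from the first step forces the left-hand side of \eqref{gen1} to be nonpositive. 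For a general $t\in[0,T]$ I choose $t_{m(n)}^n\to t$, use $\ove y^n(t_{m(n)}^n)\to y(t)$ from the uniform convergence, and exploit the nonnegativity of $\Psi,\Psi^*$ to discard the vanishing integral tail over $[t,t_{m(n)}^n]$, obtaining \eqref{gen1} at every $t$.

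I expect the main obstacle to be precisely this integral lower-semicontinuity step: upgrading the pointwise mixed strong/weak condition \eqref{eq:contK} to the time-integrated functional in the correct topologies, together with the careful identification of the weak $L^2$-limit of $\partial S(\ove y^n)$ as $\partial S(y)$, which is where the single-valuedness and strong-weak continuity of the subdifferential are essential.
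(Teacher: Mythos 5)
Your proposal is correct and follows essentially the same route as the paper's proof: the telescoping identity for the summed incremental functional, the bound by $\sum_i (G)^+$, Aubin--Lions compactness with identification of the weak limits via the strong-weak continuity of $\D E$, $\partial S$ and \eqref{eq:contL}, and the integral lower-semicontinuity passage based on \eqref{eq:contK} after restricting to $[0,t]$ by nonnegativity of $\Psi,\Psi^*$. The only quibble is cosmetic: with just the $H^1(0,T;Y)$ bound, $\|\ove y^n - \haz y^n\|_Y$ is of order $(\tau^n)^{1/2}$ rather than $\tau^n$, which is immaterial for the argument.
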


\begin{proof}[Proof of Theorem \ref{thm:cond}]
We infer from the Aubin-Lions lemma \cite[Cor. 7]{Simon87}, upon extracting not relabeled subsequences, that
\begin{align}
  &\haz y^n \weakto y \ \ \text{in} \ \ H^1(0,T;Y), \nonumber \\
& \ove y^n, \, \haz y^n \to y \ \ \text{in} \ \ C([0,T];Y), \nonumber \\
& L(\ove y^n)\,\D E(\ove y^n) \weakto \ell \ \ \text{in} \ \ L^2(0,T;Y), \label{conv1}\\
&\partial S(\ove y^n) \weakto s \ \ \text{in} \ \ L^2(0,T;Y^*). \label{conv2}
\end{align}
As $\partial S$ is strongly-weakly closed, we have that $s=\partial
S(y)$ almost everywhere. On the other hand, the strong-weak continuity
of $\D E$ and the continuity \eqref{eq:contL} of $L$
imply that $L(\ove y^n)\,\D E(\ove y^n) \to L(y)\, \D E(y)$ pointwise
in time,
so that $\ell =  L(y)\, \D E(y)$  almost everywhere.

Fix any $t\in (0,T]$ and let $t^n_m$ be such that $t \in (t^n_{m-1},t^n_m]$.
The discrete sequence of solutions $\{y_i^n\}_{i=0}^{N^n}$ fulfills
\begin{align}
& -S(\ove y^n(t)) + \int_0^{t^n_m} \Psi^*\big(\ove y^n, (\haz y^n)'- L(\ove
 y^n)\, \D E(\ove y^n)\big) + \int_0^{t^n_m} \Psi\big(\ove y^n, \partial
 S(\ove y^n)\big) + S(y^0) \nonumber\\
&\quad = \sum_{i=1}^m G(\tau_i^n , y_{i-1}^n; y_i^n) .\label{to_pass}
\end{align}
As $\Psi(y,\cdot)\geq \Psi(y,0)=0$, we conclude that $\Psi^*\geq 0$ as
well. By restricting integrals to the interval $[0,t] \subset
[0,t^n_m]$ equation \eqref{to_pass} implies that 
\begin{align}
& -S(\ove y^n(t)) + \int_0^{t} \Psi^*(\ove y^n, (\haz y^n)'- L(\ove
 y^n)\, \D E(\ove y^n)) + \int_0^{t} \Psi(\ove y^n, \partial
 S(\ove y^n)) + S(y^0) \nonumber\\
&\quad \leq \sum_{i=1}^{N^n} \big(G(\tau_i^n , y_{i-1}^n; y_i^n)\big)^+. \label{to_pass2}
\end{align}
We now pass to the limit inferior as $n\to \infty$ in relation
\eqref{to_pass2}. By using convergence \eqref{eq:G}, the lower
semicontinuity of $-S$, convergences \eqref{conv1}-\eqref{conv2}, and
the lower semicontinuity \eqref{eq:contK}, we readily obtain that the
limit $y$ fulfills
inequality~\eqref{gen1}.
\end{proof}

The conditional convergence result of Theorem \ref{thm:cond} relies
on the possibility of solving the inequality $G(\tau_i^n , y_{i-1}^n; y_i^n)\leq
0$ up to a small, controllable error, and establishing some a priori
bounds on the discrete solution. The validity of these conditions
has to be checked on the specific problem at hand.  In
the coming Section~\ref{sec:oscillator} we give an example of a
situation where \eqref{eq:G}-\eqref{eq:G4} actually hold.

In case $-S$ is convex, an example of $\{y_i^n\}_{i=0}^{N^n}$
fulfilling \eqref{eq:G} are the solutions of the implicit Euler scheme
\begin{equation}\frac{y_i^n-y_i^n}{\tau_i^n} = L(y_i^n) \, \D E(y_i^n) + K(y_i^n) \, \partial S(y_i^n)
\label{eq:euler}
\end{equation}
whenever available. Indeed, such  $\{y_i^n\}_{i=0}^{N^n}$ fulfills
\begin{align}
G(\tau_i^n , y_{i-1}^n; y_i^n) &  \leq \tau_i^n \Psi^*\left(y_i^n,
  \frac{y_i^n-y_i^n}{\tau_i^n} -  L(y_i^n) \, \D E(y_i^n) \right) +\tau_i^n
\Psi(y_i^n,\partial S(y_i^n))\nonumber\\
&\quad -\lan \partial S(y_i^n), y_i^n - y_{i-1}^n\ran =0,\label{eq:euler2}
\end{align}
where the last equality follows from \eqref{eq:fenchel2} and \eqref{eq:euler}.

Note that the incremental minimization problem \eqref{eq:min} may have
no solutions, even if the Euler scheme is solvable. In the purely dissipative case $L=0$, the
existence of a solution to \eqref{eq:min} is ensured as soon as $\tau^*>0$ exists, so that, for all $\tau\in
(0,\tau^*)$ and $y_{i-1}\in Y$ with $S(y_{i-1})>-\infty$, the function $y \mapsto -S(y) +
\tau\Psi(y,(y-y_{i-1})/\tau)$ is strongly coercive. The latter follows whenever
$-S$ has strongly compact sublevels and  would imply in
particular that the Euler scheme is also solvable. We anticipate however that the example discussed in
Section~\ref{sec:oscillator} is not purely dissipative and
$-S$, albeit convex, does not have strongly compact sublevels.

Let us mention that, in specific applications, the bounds \eqref{eq:G2}-\eqref{eq:G4}
may follow from \eqref{eq:G}.  For instance, this would be the case if
the coercivity 
\begin{equation}\Psi^*(y,\eta)+\Psi(y,\xi)\geq c\| \eta \|_Y^2 + c\|
\xi\|^{2}_{Y^*} -\frac1c\label{eq:coer}
\end{equation}
holds for some $c>0$ and all $y,\, \eta\in Y$ and $\xi \in Y^*$, namely in case $K(y)$ is
positive definite and bounded, uniformly with respect to $y$. This
however does not apply to 
the example of the damped harmonic oscillator from 
Section~\ref{sec:oscillator}, for $K$ is
singular there.

The quadratic nature of the entropy-production potential
could be generalized to the case of $p$-growth with $p>1$ without any
specific intricacy. In particular, one could consider the polynomial
case $\partial \Psi(y,\xi) = K(y) \| \xi\|^{p-2}_{Y^*}\xi$ and
 coercivity \eqref{eq:coer} would then read
$$\Psi^*(y,\eta)+\Psi(y,\xi)\geq c\| \eta \|_Y^{p'}+ c\|
\xi\|^{p}_{Y^*} -\frac1c$$
for $1/p+1/p'=1$.
This setting would correspond to the case of doubly-nonlinear GENERIC
dynamics, namely  
$$ y' = L(y)\, \D E(y) + K(y) \| \partial
S(y)\|^{p-2}_{Y^*}\partial S(y) .$$
Again, under the noninteraction condition \eqref{nic}, suitably regular
solutions conserve
energy, since $\lan \D E(y), \partial \Psi(y,\partial S(y))\ran
= \| \partial S(y)\|_{Y^*}^{p-2}\lan \partial S(y), K^*(y)\, \D
E(y)\ran=0$, and have entropy rate $(\d/\d t)(S(y)) = \| \partial S(y)\|_{Y^*}^{p-2}\lan \partial S(y), K(y)\, \D
S(y)\ran \geq 0$.

Let us further remark that the conditional convergence result of Theorem \ref{thm:cond}
 can serve as an a-posteriori tool to check the convergence
of time-discrete approximations $\{y^n_i\}$, regardless of the method used to
generate them. In particular, relation \eqref{eq:G} can be seen as a
sort of a-posteriori error estimator.

We conclude this section by pointing out that the statement of
Theorem \ref{thm:cond} would hold also for other minimizing-movements
schemes, where nonlinearities are possibly evaluated explicitly. In
particular, instead of $G$ one could consider the functional $\tilde G:
(0,\infty)\times Y \times Y \to (-\infty,\infty]$ defined by 
$$
\tilde G(\tau,\eta;y) = -S(y) +\tau \Psi^*\left(y^1, \frac{y-\eta}{\tau} -
    L(y^2) \, \D E(y^3)\right) \nonumber
	+ \tau \Psi(y^1,\partial S(y)) + S(\eta),
$$
where $y^j$, $j=1,2,3$, are independently chosen to be either $y_i$ or $y_{i-1}$
(or else, for instance $(y_i {+} y_{i-1})/2$). Under conditions
\eqref{eq:G}-\eqref{eq:G4}, convergence would again follow as in
Theorem \ref{thm:cond}.

\section{The minimizing-movements scheme for the damped harmonic oscillator}\label{sec:oscillator}

In this section, we analyze the simplest GENERIC system fulfilling conditions
\eqref{eq:G}-\eqref{eq:G4}. We consider the case of the damped harmonic
oscillator, namely
\begin{align}
&mq''+\nu q' +\kappa q +\lambda \theta =0,\label{eq:d1}\\
&c\theta' = \nu (q')^2 +\lambda q'\theta. \label{eq:d2}
\end{align}
Here, $q\in \Rz$ represents the position of the harmonic oscillator and $\theta>0$
is its absolute temperature. Note that the case of $\theta$ being the
given, constant
temperature of a surrounding heat bath is considered in
\cite{Oettinger2} instead.
The positive constants $m$, $\nu$, $\kappa$, $\lambda$ and $c$ are
the mass of the oscillator, the viscosity
of the medium, the elastic modulus, the thermal-exchange coefficient, and
the heat capacity, respectively.

By letting $p\in \Rz$ be the momentum of the harmonic oscillator, we
rewrite \eqref{eq:d1}-\eqref{eq:d2} as the first-order system
\begin{align}
  &q' = \frac{p}{m}  , \label{eq:o1}\\
&p'=-\frac{\nu p}{m} - \kappa q -\lambda \theta , \label{eq:o2}\\
&\theta' = \frac{\nu p^2}{m^2c} +\frac{\lambda p\theta}{mc} .\label{eq:o3}
\end{align}

System \eqref{eq:o1}-\eqref{eq:o3} can be written in the GENERIC
form \eqref{gen} by letting   $y=(q,p,\theta)\in Y = \Rz^2\times(0,\infty)$ represent the state of the harmonic oscillator and by defining the total
energy and the total entropy as
 $$E(q,p,\theta) = \frac{p^2}{2m} + c\theta+\frac{\kappa q^2}{2} \ \
 \text{and} \ \ 
 S(q,p,\theta) = -\lambda q + c +c\ln\theta.$$
These definitions comply with the classical Helmholtz relations under
the choice for the free energy
$$\Phi(q,p,\theta) = \frac{\kappa q^2}{2}+\lambda q\theta - c \theta \ln
\theta.$$ 
 In particular, $S =-\partial_\theta \Phi$ and $E = p^2/(2m) + \Phi +
 \theta S$.
Note that both $E$ and $S$ are smooth, $-S$ is convex, and
$S(y)>-\infty$ iff $\theta>0$.  

The operators $L, \, K : \Rz^3 \to \Rz^{3\times 3}$ 
are given by
\begin{equation*}
L(x)=
  \begin{pmatrix} 
   0 & 1 & 0 \\
   -1 & 0 & -\lambda\theta/c \\
   0 & \lambda\theta/c & 0
  \end{pmatrix}
 \ \ \text{and} \ \  \ K(x)=\nu\theta
  \begin{pmatrix} 
   0 & 0 & 0 \\
   0 & 1 & -p/(mc) \\
   0 & -p/(mc) & p^2/(mc)^2
  \end{pmatrix}.
\end{equation*}
One readily checks that $L$ is antisymmetric and a tedious
computation shows that the Jacobi identity holds. On the other hand, $K$ is
symmetric and positive semidefinite, while not being invertible. Note
that $K(y)$ has rank two for all $p\not =0 $, so that the construction 
in \cite{Oettinger2} does not apply here.

By computing the gradients
\[
 \D E(y)=(\kappa q, p/m,c), \qquad \D S(y)=(-\lambda,0,c/\theta),
\]
one can easily check that the noninteraction condition \eqref{nic}
holds.

Letting $\xi=(\xi_q,\xi_p,\xi_\theta)\in \Rz^3$ and
$\eta=(\eta_q,\eta_p,\eta_\theta) \in \Rz^3$, the entropy-production
potential and its dual read 
\begin{align*}
&\Psi(y,\xi) = \frac{1}{2}\xi \cdot
              K(y)\xi=\frac{\nu\theta}{2}\Bigl(\xi_p-\frac{p}{mc}\xi_\theta\Bigr)^2,\\
&\Psi^*(y,\eta) =
\left\{
\begin{array}{ll}
	\displaystyle
	\frac{1}{2}\frac{\eta_p^2}{\nu\theta} &\quad\text{if} \
                                                       \eta_q =0
                                                       \ \text{and} \
                                                       p \eta_p + mc
                                                       \eta_\theta =0, \\[2mm]
	\displaystyle
	\infty &\quad\text{otherwise.} 
	\end{array}
\right.
\end{align*}
 
With the definitions above, the incremental functional
$G:(0,\infty)\times \Rz^3 \times \Rz^3 \to (-\infty,\infty]$ for
$y=(q,p,\theta)$ and $\eta=(\eta_q,\eta_p,\eta_\theta)$ takes the
form 
\begin{align*} 
&G(\tau,\eta;y)=\lambda q -c\ln \theta
+\frac{\tau}{2}\frac{1}{\nu\theta}\Bigl( \frac{p -
  \eta_p}{\tau}+\kappa q+\lambda\theta \Bigr)^2
+\frac{\tau}{2}\nu\theta\Bigr(\frac{p}{m\theta}\Bigl)^2-\lambda
  \eta_q
+c\ln\eta_\theta\\[2mm]
& \quad \text{if}  \ \ \theta>0,\ \ \disp
  \frac{q-\eta_q}{\tau}=\frac{p}{m}, \ \ \text{and} \ \ \disp c \frac{\theta-\eta_\theta}{\tau} = -\frac{p}{m}\frac{p-\eta_p}{\tau}- \frac{\kappa
  pq}{m},\\[2mm]
&G(\tau,\eta;y)=\infty \quad \text{otherwise}.
\end{align*}

Assume to be given the time partition
$0=t_0<\dots<t_N=T$ with $\tau_i=t_i - t_{i-1}$ for $i=1,\dots,N$. By letting
$\eta=(q_{i-1},p_{i-1},\theta_{i-1})$ and defining $(q_0,p_0,\theta_0)
= (q^0,p^0,\theta^0)$ for some given initial state $ (q^0,p^0,\theta^0)\in \Rz^2 \times (0,\infty)$, the incremental minimization
problem \eqref{eq:min} becomes
\begin{align}
 \nonumber
 &\min_{(q_i,p_i,\theta_i)}\Bigl\{\lambda q_i -c\ln \theta_i
   +\frac{\tau_i}{2 \nu\theta_i}\Bigl( \frac{p_i - p_{i-1}}{\tau_i}+\kappa q_i+\lambda\theta_i \Bigr)^2
 +\frac{\tau_i\nu p_i^2}{2m\theta_i}\\ 
 &\qquad \qquad -\lambda q_{i-1}+c\ln\theta_{i-1}\Bigr\}\quad \label{eq:min2}\\[2mm]
&\text{under the constraints}\nonumber\\[2mm]
&\qquad \theta_i >0, \label{eq:c0}\\[1mm]
&
\qquad \frac{q_i-q_{i-1}}{\tau_i}=\frac{p_i}{m},\label{eq:c1}\\
 &\qquad c\frac{\theta_i-\theta_{i-1}}{\tau_i}=-\frac{\kappa q_i p_i}{m}
   -\frac{p_i}{m}\frac{p_i-p_{i-1}}{\tau_i}, \label{eq:c2}\\[1mm]
&\text{for} \ i=1,\dots,N.\nonumber
\end{align}

We devote the remainder of this section to prove that the convergence
result of Theorem \ref{thm:cond} applies to the scheme \eqref{eq:min2}-\eqref{eq:c2},
namely that conditions \eqref{eq:G}-\eqref{eq:G4} are fulfilled. In
particular, we check that the
minimization problem admits a solution $\{y_i\}_{i=0}^N =
\{(q_i,p_i,\theta_i)\}_{i=0}^N $ (Subsection \ref{sec:mini}), that
such solution is unique if time steps are small enough (Subsection \ref{sec:uni}),
that each such solutions  fulfills condition
\eqref{eq:G} in the stronger sense $G(\tau_i,y_{i-1};y_i) \leq 0$
(Subsection \ref{sec:eu}), and that the bounds \eqref{eq:G2}-\eqref{eq:G4}
can be established, independently of the partition (Subsection
\ref{sec:bounds}). Eventually, numerical simulations are presented in Subsection \ref{sec:num}.

\subsection{Existence of minimizers}\label{sec:mini}
Assume to be given $\eta=(q_{i-1},p_{i-1},\theta_{i-1})\in \Rz^2\times(0,\infty).$
As $y \mapsto G(\tau_i,\eta;y)$ is smooth on its domain, in order to
prove that the scheme \eqref{eq:min2}-\eqref{eq:c2} admits a solution,
we just need to check coercivity, namely that sublevels of $y \mapsto
G(\tau_i,\eta;y)$ are bounded. 

By using the constraints \eqref{eq:c1}-\eqref{eq:c2}, we can express
$q_i$ and $\theta_i$ in terms of $p_i$ and the values
$(q_{i-1},p_{i-1}, \theta_{i-1})$ 
 in the form
 \begin{equation}
  q_i = \tau_i \frac{p_i}{m} + q_{i-1} \ \ \text{and} \ \ \theta_i=f(p_i),\label{eq:redefine}
 \end{equation}
 where the function $f$ is defined by
 \[
  f(p_i) = \theta_{i-1}-\frac{\tau_i^2\kappa}{m^2c}p_i^2-\frac{\tau_i\kappa}{mc}p_i q_{i-1}-\frac{1}{mc}p_i^2+\frac{1}{mc}p_i p_{i-1}.
 \]
By substituting the two expressions in the minimum problem
\eqref{eq:min2}-\eqref{eq:c2}, we can reduce it to a minimization in the variable $p_i$
only. Indeed,  problem \eqref{eq:min2}-\eqref{eq:c2} is equivalent to 
\begin{equation}
  \label{eq:min3}
  \min_{p_i} F(p_i) \ \  \text{under the constraint} \ \ f(p_i)>0,
\end{equation}
where we have defined 
\begin{align}
 \nonumber
F(p_i) &:= \frac{\tau_i\lambda p_i}{m}
   -c\ln f(p_i) +\frac{\tau_i}{2\nu f(p_i)}\Bigl(
   \frac{p_i-p_{i-1}}{\tau_i}+\frac{\tau_i\kappa}{m}  p_i+\kappa q_{i-1}
   +\lambda f(p_i)\Bigr)^2\nonumber\\
   &\phantom{:xx} + \frac{\tau_i\nu p_i^2}{2m f(p_i)} 
   +c\ln\theta_{i-1}. \nonumber
\end{align}
Again, as $F$ is smooth on its domain,  in order to solve the
minimization problem \eqref{eq:min3} we just need to prove that $F$ has bounded sublevels. This
follows from the fact that $F(p_i)\to \infty$ if $f(p_i)\to 0^+$ and
that $f(p_i)>0$ iff $p_i$ belongs to a bounded interval, depending on
$(q_{i-1},p_{i-1},\theta_{i-1})$. In fact, as $f$ is quadratic and
$\theta_{i-1}>0$, it follows that $f(p_i)>0$ if and only if
$|p_i - \haz p|<\haz r$ with 
\begin{align*}
&\haz p = \frac{p_{i-1}-\tau_i \kappa q_{i-1}}{2+2\tau_i^2 \kappa/m
  }  \ \ \text{and} \ \ 
\haz r=\frac{\left({(p_{i-1}{-}\tau_i\kappa
  q_{i-1})^2}   + 
  4(1{+}\tau_i^2\kappa/m)mc\theta_{i-1}\right)^{1/2} }{2+2\tau_i^2\kappa/m}  
 .
\end{align*}
Hence, for all given $(q_{i-1},p_{i-1},\theta_{i-1})\in\Rz^2\times
(0,\infty)$, we can find a solution $p_i$ of \eqref{eq:min3}. 
 We conclude from \eqref{eq:redefine} that
$(q_i,p_i,\theta_i)$ solves \eqref{eq:min2}-\eqref{eq:c2}. In particular, we have
that $\theta_i>0$.

\subsection{Uniqueness of minimizers}\label{sec:uni}
 Let us remark that the minimizers
$(q_i,p_i,\theta_i)$ of \eqref{eq:min2}-\eqref{eq:c2} need not be unique in general. Still, the
function $F$ is strictly convex for $\tau_i$
small enough, depending on the values $(q_{i-1},p_{i-1},\theta_{i-1})$
and on material parameters. 
\newpage
Let us start by rewriting $F$ as
%
\begin{align}
  F(p_i)&= -c \ln f(p_i) + \frac{(1+\tau_i^2 
          \kappa/m)^2 }{2\tau_i \nu  }\frac{ (p_i+h_i)^2}{  f(p_i)}
          \nonumber\\
&\quad + \frac{\tau_i \nu}{2m}\frac{p_i^2}{f(p_i)}
          + \frac{\tau_i\lambda^2f(p_i)}{2\nu}+ r_i(p_i), \label{todiff}
\end{align}
where $h_i =(-p_{i-1}+\tau_i \kappa q_{i-1})/(1+ \tau_i^2 \kappa/m)$ and $r_i$
is the affine function
$$r_i(p_i)=  \frac{\tau_i \lambda p_i}{m}
+ \frac{\lambda (1+ \tau_i^2 \kappa/m)}{\nu }(p_i + h_i)
 + c \ln \theta_{i-1}.$$
Using  the following facts 
\begin{align*}
 \left(\frac{(p_i+h_i)^2}{f(p_i)}\right)''  &=  \frac{\big(\big(2
                                              f^3(p_i)\big)^{1/2} -\big(2
    f(p_i)\big)^{1/2}(p_i+h_i) f'(p_i)\big)^2 }{  f^4(p_i)}\\
& \,\quad  - \frac{ (p_i+h_i)^2
    f^2(p_i)f''(p_i)}{  f^4(p_i)}> 0,\\
 \left(\frac{p_i^2}{f(p_i)}\right)''  &=  \frac{\big(\big(2
                                              f^3(p_i)\big)^{1/2} -\big(2
    f(p_i)\big)^{1/2} p_i  f'(p_i)\big)^2 }{  f^4(p_i)}\\
& \,\quad  - \frac{ p_i ^2
    f^2(p_i)f''(p_i)}{  f^4(p_i)}> 0, 
\end{align*}
and differentiating $F$ in \eqref{todiff} twice, we obtain
\begin{align*}
  F''(p_i) &> -c (\ln f(p_i))''  +\frac{\tau_i
    \lambda^2}{2\nu}f''(p_i)  = \frac{c(f'(p_i))^2}{f^2(p_i)} +
    f''(p_i)\left(- \frac{c}{f(p_i)} +  \frac{\tau_i
    \lambda^2}{2\nu}\right)\\
&\geq f''(p_i)\left(- \frac{c}{f(p_i)} +  \frac{\tau_i
    \lambda^2}{2\nu}\right)=:g(p_i).
\end{align*}
 As $f''(p_i) =
-2(1+\tau_i^2 \kappa/m)/(mc)<0$, the function $g$ is nonnegative as long
as $f(p_i)\leq 2\nu c/(\tau_i \lambda^2)$. It hence suffices to choose
$\tau_i$ so small that 
\begin{equation}
  \label{eq:small}
 \max f \leq \frac{2\nu c}{\tau_i \lambda^2}.
\end{equation}
 This condition depends on the size of the time-step  
and on material parameters and
implies that $F$ is strictly convex, thus admitting a unique minimizer
$p_i$. Consequently, problem \eqref{eq:min2}-\eqref{eq:c2} has a unique
minimizer $(q_i,p_i,\theta_i)$.

We can make the time-step dependent bound \eqref{eq:small} on $\tau_i$
more explicit by directly computing the
maximum of $f$:
%

\begin{align*}
\max f &= \frac{\left(\displaystyle \frac{-\tau_i\kappa q_{i-1}}{mc} +
    \frac{p_{i-1}}{mc}\right)^2+4\left( \displaystyle \frac{\tau_i^2\kappa}{m^2c} +
    \frac{1}{mc}\right) \theta_{i-1}}{4\left(\displaystyle 
    \frac{\tau_i^2\kappa}{m^2c} + \frac{1}{mc}\right)}\\
&=
\frac{(-\tau_i \kappa q_{i-1} + p_{i-1})^2}{4(\tau_i^2 \kappa c + mc)} + \theta_{i-1}
\leq
\frac{p_{i-1}^2}{2mc} + \frac{\tau_i^2 \kappa^2 q_{i-1}^2}{2mc} +
\theta_{i-1}. 
\end{align*}

In Subsection \ref{sec:bounds} we will check that above right-hand
side can be
bounded by a positive constant $c_0$ in terms of the initial data and of material parameters only, uniformly with
respect to $i$, see \eqref{eq:bound1}. 
Condition
\eqref{eq:small} can hence be strengthened by asking the partition to
be such that  
\begin{equation} \label{eq:small2}
c_0 \leq \min_i\frac{2\nu c}{\tau_i
  \lambda^2}.
\end{equation} 
This stronger condition ensures the
unique solvability of the minimization problem \eqref{eq:min2}-\eqref{eq:c2} for all $i=1,\dots,N$.

\subsection{Minimizers fulfill condition \eqref{eq:G}}\label{sec:eu}
Following the discussion of Section \ref{sec:gen}, the convexity of
$-S$ ensures that 
condition \eqref{eq:G} holds as soon as one can prove that the
implicit Euler scheme \eqref{eq:euler} is solvable. In the current
setting, the implicit Euler scheme reads
\begin{align}
	&\displaystyle
	 \frac{q_i-q_{i-1}}{\tau_i}=\frac{p_i}{m}, \label{eq:Eimp1}\\[1mm]
&	\displaystyle
	 \frac{p_i-p_{i-1}}{\tau_i}=-\frac{\nu p_i}{m}-\kappa q_i
          -\lambda \theta_i, \label{eq:Eimp2}\\[1mm]
&	\displaystyle
	 \frac{\theta_i-\theta_{i-1}}{\tau_i}=\frac{\nu
          p_i^2}{m^2c}+\frac{\lambda p_i\theta_i}{mc} \label{eq:Eimp3}
\end{align}
for $i=1,\dots,N$.

Let the initial data $(q^0,p^0,\theta^0)\in \Rz^2 \times (0,\infty)$ 
 be given and assume to have solved \eqref{eq:Eimp1}-\eqref{eq:Eimp3} up to
level $i-1$. In particular, let
$(q_{i-1},p_{i-1},\theta_{i-1}) \in \Rz^2 \times (0,\infty)$.
 Relation \eqref{eq:Eimp1} can be written as
 \[
  q_i=\frac{\tau_i p_i}{m}+q_{i-1}
 \]
 and we can substitute it into  \eqref{eq:Eimp2}  obtaining
 \begin{equation}
  p_i = \frac{-\tau_im\lambda}{m+\tau_i\nu+\tau^2_i\kappa} \theta_i   +
  \frac{-\tau_im\kappa
  q_{i-1}+mp_{i-1}}{m+\tau_i\nu+\tau^2_i\kappa}
=:\alpha_i\theta_i+\beta_i \label{eq:pi}
 \end{equation}
where $\alpha_i$ and $\beta_i$ depend just on material parameters, $\tau_i$, and on $q_{i-1}$
and $p_{i-1}$. 

We may hence rewrite equation \eqref{eq:Eimp3} as 
 \begin{align*}
  \theta_i &=\theta_{i-1}+\frac{\tau_i\nu p_i^2}{m^2c} +\frac{\tau_i \lambda p_i\theta_i}{mc}\\
  &=\theta_{i-1}+\frac{\tau_i\nu}{m^2c}\Bigl(\alpha^2_i\theta_i^2 +2\alpha_i\beta_i\theta_i +\beta^2_i\Bigr) +\frac{\tau_i\lambda}{mc}\Bigl(\alpha_i\theta_i^2 +\beta_i\theta_i\Bigr),  
 \end{align*}
 which gives the expression
 \begin{align}
0&=\Bigl(\frac{\tau_i \nu \alpha_i^2}{m^2c} + \frac{\tau_i \lambda
    \alpha_i}{mc} \Bigr)\theta_i^2 + \Bigl(\frac{2\tau_i \nu
    \alpha_i\beta_i}{m^2c}+ \frac{\tau_i \lambda
    \beta_i}{mc}-1\Bigr)\theta_i + \Bigl( \theta_{i-1}+\frac{\tau_i \nu
    \beta_i^2}{m^2c}\Bigl)\nonumber\\
&=:\gamma_i\theta_i^2 + \delta_i\theta_i +\epsilon_i, \label{eq:zero}
 \end{align}
with $\gamma_i$, $\delta_i$, and $\epsilon_i$ depending on $\alpha_i$,
$\beta_i$, $\theta_{i-1}$, and data. Note that 

$$
  \gamma_i =
  \frac{\tau_i}{mc}\bigg(\frac{\nu\alpha_i}{m}+\lambda\bigg)\alpha_i =
  - \frac{\tau_i^2\lambda^2}{c} \frac{m+\tau_i^2\kappa}{(m+\tau_i\nu+\tau_i^2\kappa)^2}<0.
$$
 We infer from $\theta_{i-1}>0$ that $\epsilon_i>0$. Hence, the
second-order equation \eqref{eq:zero} has a unique solution
$\theta_i>0$. This uniquely defines $p_i$ and $q_i$ via \eqref{eq:pi}
and \eqref{eq:Eimp1}, respectively.  In particular, the implicit Euler
scheme  \eqref{eq:Eimp1}-\eqref{eq:Eimp3} has a unique solution $y^e_i
=(q_i,p_i,\theta_i)$. 
 
Let now $y_i $ be a solution to the incremental minimization
problem \eqref{eq:min2}. Owing to \eqref{eq:euler2}, we conclude that 
$$
G(\tau_i,y_{i-1};y_i) = \min_y G(\tau_i,y_{i-1};y) \leq
G(\tau_i,y_{i-1};y_i^e) \stackrel{\eqref{eq:euler2}}{=}0,
$$
where we have also used  the fact that  $y^e_i $ fulfills the
constraints \eqref{eq:c0}-\eqref{eq:c2}. In particular, condition \eqref{eq:G}
holds in the even stronger form 
\begin{equation} G(\tau_i,y_{i-1};y_i) \leq 0 \quad \text{for} \ \
i=1,\dots,N. \label{eq:stronger}
\end{equation}

\subsection{A priori bounds}\label{sec:bounds}
We now prove that condition \eqref{eq:G2}-\eqref{eq:G4} of Theorem
\ref{thm:cond} hold. This will follow by checking a priori bounds on minimizers $\{y_i\}_{i=0}^N$ of
\eqref{eq:min2}, independently from the time partition. 

Let us start by remarking that
the energy is nonincreasing. By \eqref{eq:c1}, one can equivalently rewrite the constraint \eqref{eq:c2}
as
 \begin{align*}
  c\theta_i-c\theta_{i-1} =-\frac{\kappa q_i^2}{2}+\frac{\kappa
   q_{i-1}^2}{2} -\frac{\kappa}{2}|q_i-q_{i-1}|^2 -\frac{p_i^2}{2m} +\frac{p^2_{i-1}}{2m}-\frac{1}{2m}|p_i-p_{i-1}|^2.
 \end{align*}
A rearrangement of the terms leads to
$$E(y_i) + \frac{\kappa}{2}|q_i-q_{i-1}|^2+\frac{1}{2m}|p_i-p_{i-1}|^2 =
E(y_{i-1}).$$
Hence,   $  E(y_i)$ is nonincreasing and $E(y_i) =E(y_{i-1})$ iff
$q_i=q_{i-1}$ and $p_i=p_{i-1}$. By taking the sum for $i=1,\dots,j$ for
$j\leq N$, we find that
\begin{equation}
  \label{eq:energy}
  E(y_j) + D^q_j +D^p_j=E(y^0) \quad \forall j =1,\dots,N,
\end{equation}
where we have set  
$$D^q_j
=\sum_{i=1}^j\frac{\kappa}{2}|q_i-q_{i-1}|^2  \ \ \text{and} \ \ D^p_j
=\sum_{i=1}^j\frac{1}{2m}|p_i-p_{i-1}|^2.$$ 
The nonnegative terms
$D^q_j$ and $D^p_j$ exactly quantify the energy dissipativity of the
scheme. Owing to \eqref{eq:energy}, we obtain the uniform bound 
\begin{equation}
 |p_i| + |\theta_i| +  |q_i|\leq
C \quad \forall i=1,\dots,N,\label{eq:bound1}
\end{equation}
where, here and in the following, the symbol $C$ denotes a generic
positive constant depending 
on the initial data $y^0$ and on material
parameters, but not on the time partition. Bound \eqref{eq:bound1} and constraint
\eqref{eq:c1} imply that 
\begin{equation}
 \left|\frac{q_i-q_{i-1}}{\tau_i}\right|\leq
C \quad \forall i=1,\dots,N.\label{eq:bound2}
\end{equation}
Moreover, we readily check that 
\begin{equation}
  |L(y_i)\, \D E(y_i)|= |(p_i/m,-kq_i-\lambda \theta_i,\lambda
  p_i\theta_i/m)| \stackrel{\eqref{eq:bound1}}{\leq} C \quad \forall i=1,\dots,N.\label{eq:bound20}
\end{equation}

Let us take  the sum for $i=1,\dots,j$ for $j \leq N$ in \eqref{eq:stronger} getting
$$ - S(y_j) + \psi^*_j+ \psi_j \leq -S(y^0),$$
where 
$$\psi^*_j = \sum_{i=1}^j \frac{\tau_i}{2 \nu\theta_i}\Bigl( \frac{p_i - p_{i-1}}{\tau_i}+\kappa q_i+\lambda\theta_i \Bigr)^2
  \ \ \text{and}   \ \ \psi_j=\sum_{i=1}^j \frac{\tau_i\nu
    p_i^2}{2m\theta_i}.$$
As $\psi^*_j$ and $\psi_j$ are nonnegative, we infer that $  S(y_i)$
is nondecreasing. In particular,
$$ - c\ln\theta_j \leq -S(y^0) - \lambda q_j +c
\stackrel{\eqref{eq:bound1}}{\leq }C \quad \forall j=1,\dots,N. $$
and consequently,
\begin{equation}
  \label{eq:bound3}
  \theta_i \geq \theta_{\rm min}>0 \quad \forall i=1,\dots,N, 
\end{equation}
 for some given $\theta_{\rm min}$ depending just on $y^0$ and the material
parameters. This ensures that 
\begin{equation}
  \label{eq:bound30}
  |\D S(y_i)| = |(-\lambda,0,c/\theta_i)| \leq C \quad \forall i=1,\dots,N.
\end{equation}

It follows from the bound \eqref{eq:bound1} on $q_i$ and
$\theta_i$ that $S(y_i) =
-\lambda q_i + c + c \ln \theta_i \leq
C$ for all $i=1,\dots,N$. We conclude that $\psi_N \leq
-S(y^0)+S(y_N)\leq C$ and
\begin{align}
&\sum_{i=1}^N  \frac{\tau_i}{2\nu
  \theta_i} \left|\frac{p_i - p_{i-1}}{\tau_i} \right|^2 \nonumber\\
&\quad= \psi_N^* - \sum_{i=1}^N  \frac{\tau_i}{2\nu
  \theta_i} \left(\kappa q_i +\lambda\theta_i\right)^2 - 2 \sum_{i=1}^N  \frac{\tau_i}{2\nu
  \theta_i} \left(\frac{p_i - p_{i-1}}{\tau_i} \right) \left(\kappa
  q_i +\lambda\theta_i\right)\nonumber\\
&\quad\leq \psi_N^*  + \sum_{i=1}^N  \frac{\tau_i}{2\nu
  \theta_i} \left(\kappa q_i +\lambda\theta_i\right)^2
  +\sum_{i=1}^N\frac{\tau_i}{4\nu \theta_i} \left|\frac{p_i - p_{i-1}}{\tau_i}
  \right|^2 \nonumber\\
&\quad 
\stackrel{\eqref{eq:bound1}}{\leq} C  +\sum_{i=1}^N  \frac{\tau_i}{4\nu
  \theta_i} \left|\frac{p_i - p_{i-1}}{\tau_i} \right|^2.\label{eq:pp}
\end{align}
Using the fact that $\theta_i$ is
uniformly bounded by \eqref{eq:bound1}, we can hence bound
\begin{align}
  \sum_{i=1}^N \tau_i \left|\frac{p_i - p_{i-1}}{\tau_i} \right|^2
  \leq  4\nu \,(\max_i\theta_i) \sum_{i=1}^N  \frac{\tau_i}{4\nu
  \theta_i} \left|\frac{p_i - p_{i-1}}{\tau_i} \right|^2
  \stackrel{\eqref{eq:bound1}+\eqref{eq:pp}}{\leq}  C.\label{eq:bound4}
\end{align}
Eventually, we infer from constraint \eqref{eq:c2} that 
\begin{equation}
  \label{eq:bound5}
  \sum_{i=1}^N \tau_i \left|\frac{\theta_i - \theta_{i-1}}{\tau_i}
  \right|^2 \leq  2\sum_{i=1}^N \tau_i \left(\frac{\kappa^2 q_i^2
      p_i^2}{m^2c^2} + \frac{p_i^2}{m^2c^2}\left| \frac{p_i-p_{i-1}}{\tau_i}\right|^2 \right)\leq C.
\end{equation}

Bounds \eqref{eq:bound1}-\eqref{eq:bound2} and
\eqref{eq:bound4}-\eqref{eq:bound5} imply condition
\eqref{eq:G2} from Theorem \ref{thm:cond}. On the other hand, bounds
\eqref{eq:bound20} and \eqref{eq:bound30} imply conditions
\eqref{eq:G3} and \eqref{eq:G4}, respectively. Hence, the convergence
statement of Theorem
\ref{thm:cond} holds. In particular, given initial values
$(q^0,p^0,\theta^0)\in \Rz^3$ with $\theta^0>0$, a 
sequence of partitions $0=t_0^n<\dots<t^n_{N^n}=T$ with $\tau^n=\max_i( t^n_i
- t^n_{i-1})\to 0$, and corresponding minimizers $\{(q_i^n,p_i^n,
\theta_i^n)\}_{i=0}^N$ of \eqref{eq:min2}, it follows that 
\begin{equation}
(\haz q^n,\haz p^n,\haz \theta^n) \weakto (q,p,\theta) \ \
\text{in} \ \ H^1(0,T;\Rz^3),\label{eq:conf}
\end{equation}
where $ (q,p,\theta)$ solves the damped harmonic oscillator system
\eqref{eq:o1}-\eqref{eq:o3} with initial value $
(q(0),p(0),\theta(0))= (q^0,p^0,\theta^0)$. Note that solutions of
\eqref{eq:o1}-\eqref{eq:o3} are
unique. Hence, convergence \eqref{eq:conf}
holds for the whole sequence of
discrete solutions, not just for a subsequence.

As $E$ and $S$ are smooth, $\ove y^n$ is bounded, and $\theta_i \geq \theta_{\rm min}>0$, we find that 
$E(\ove y(\cdot)) \to E(y(\cdot))\equiv E(y^0)$ and $ S(\ove y(\cdot))
\to S(y(\cdot))$ uniformly, together with their time derivatives of all orders.
More precisely, upon remarking that 
\begin{align*}
&D_N^q \leq \tau^n\sum_{i=1}^N\frac{\tau_i\kappa}{2} \left|\frac{q_i -
    q_{i-1}}{\tau_i} \right|^2\stackrel{\eqref{eq:bound2}}{\leq}
C\tau^n, \\
&D_N^p  \leq \tau^n\sum_{i=1}^N\frac{\tau_i}{2m} \left|\frac{p_i -
    p_{i-1}}{\tau_i} \right|^2\stackrel{\eqref{eq:bound4}}{\leq}
C\tau^n,
\end{align*}
we may use \eqref{eq:energy} and check that 
$$ -C\tau^n + E(\ove y^n(t)) \leq E^0 \quad \forall t \in [0,T].$$
In particular, we control the energy dissipation as follows:
\begin{equation}
\max_{t\in [0,T]}\| E(\ove y^n(t)) -  E(y^0)\|_{L^\infty(0,T)} \leq C\tau^n.
\label{errore}
\end{equation}

\subsection{Numerical tests}\label{sec:num}

We record here some numerical evidence on the performance of the
minimizing-movements scheme \eqref{eq:min2}-\eqref{eq:c2}. All computations are
performed in Matlab. In the following, we choose 
\begin{equation}
m=\nu=\kappa=\lambda=c=1, \quad y^0 = (q^0,p^0,\theta^0)= (1,1,1),
\quad T=15.\label{data}
\end{equation}
Given a
uniform time-partition with time step $\tau=T/N$, we find a
solution $\{y_i\}_{i=0}^N$ of the minimizing-movements scheme
\eqref{eq:min2}-\eqref{eq:c2} via Newton's method. Note that, by
choosing \eqref{data} and recalling the bound \eqref{eq:energy}, one
has that the constant $c_0$ in \eqref{eq:small2} can be bounded from above
by $E(y^0)=2$, for all $\tau\leq 1$. In particular, condition
\eqref{eq:small2} is fulfilled by all $\tau\leq 1$ and the solution of
the minimizing-movements scheme is unique.

 The unique solution $\{y^e_i\}_{i=0}^N$ of the
implicit Euler scheme \eqref{eq:Eimp1}-\eqref{eq:Eimp3} is obtained
directly from \eqref{eq:pi}-\eqref{eq:zero}. Eventually, the numerical
reference
solution $t\mapsto y(t)$ is calculated by means of the Matlab solver
ode45 choosing $10^{-4}$ for the maximal time step and $10^{-8}$ for the absolute tolerance.

Figures~\ref{q1}-\ref{entropy1} illustrate the
numerical
reference solution and the time-discrete solutions for
$\tau=1/4$. Both the minimizing-movements and the  Euler scheme
dissipate energy, see Figure~\ref{entropy1} left. This is of
course an undesired effect, which is however attenuated as $\tau \to
0$, see \eqref{errore}. Energy dissipation seems to be more pronounced
for the Euler scheme. On the other hand, entropy is nondecreasing for
the minimizing-movements scheme whereas the Euler scheme shows a nonmonotone
entropy, which is nonphysical, see Figure~\ref{entropy1} right. 
%
%

\begin{figure}[ht]
\centering
\includegraphics[width=60mm]{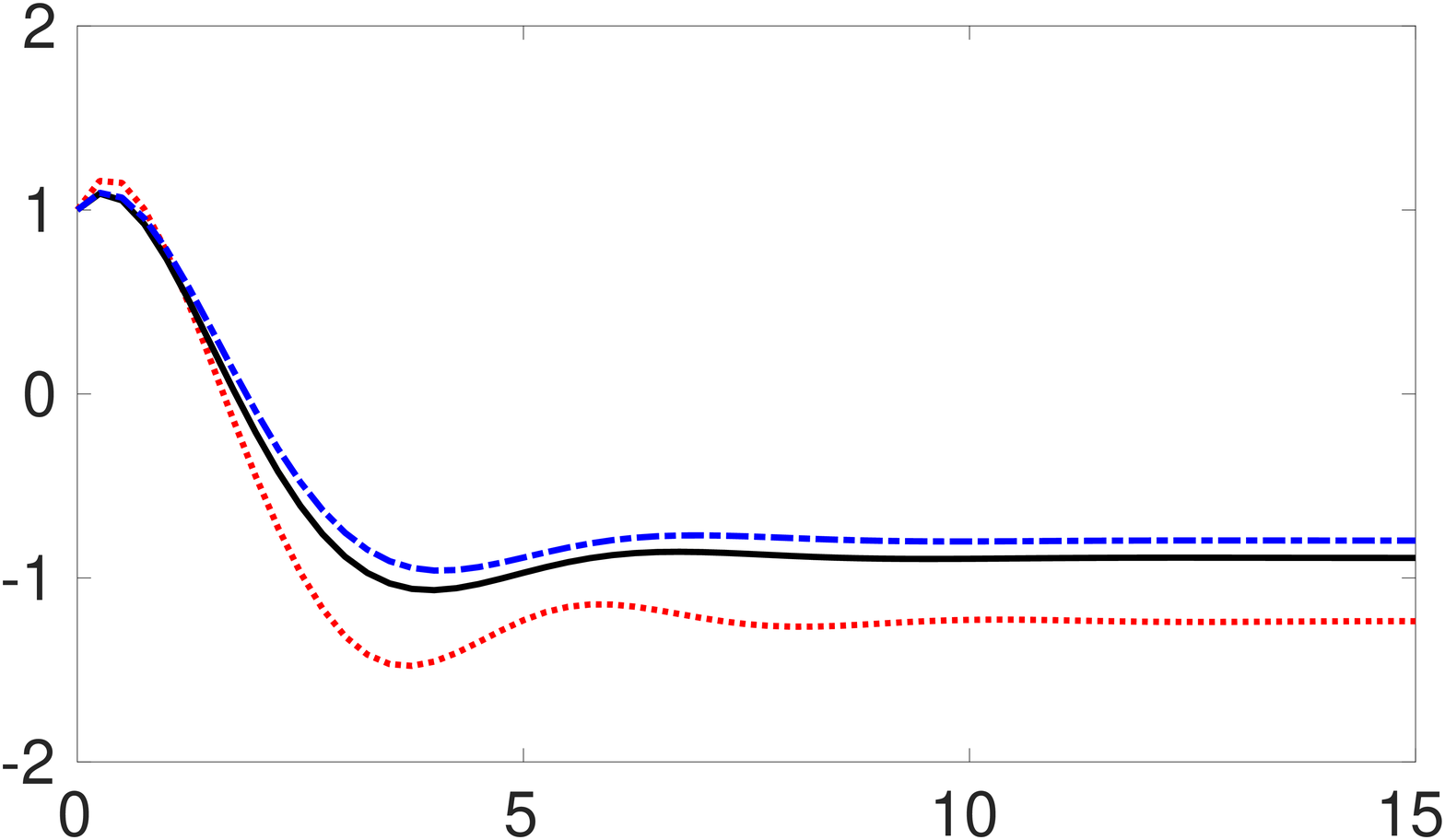} 
\includegraphics[width=60mm]{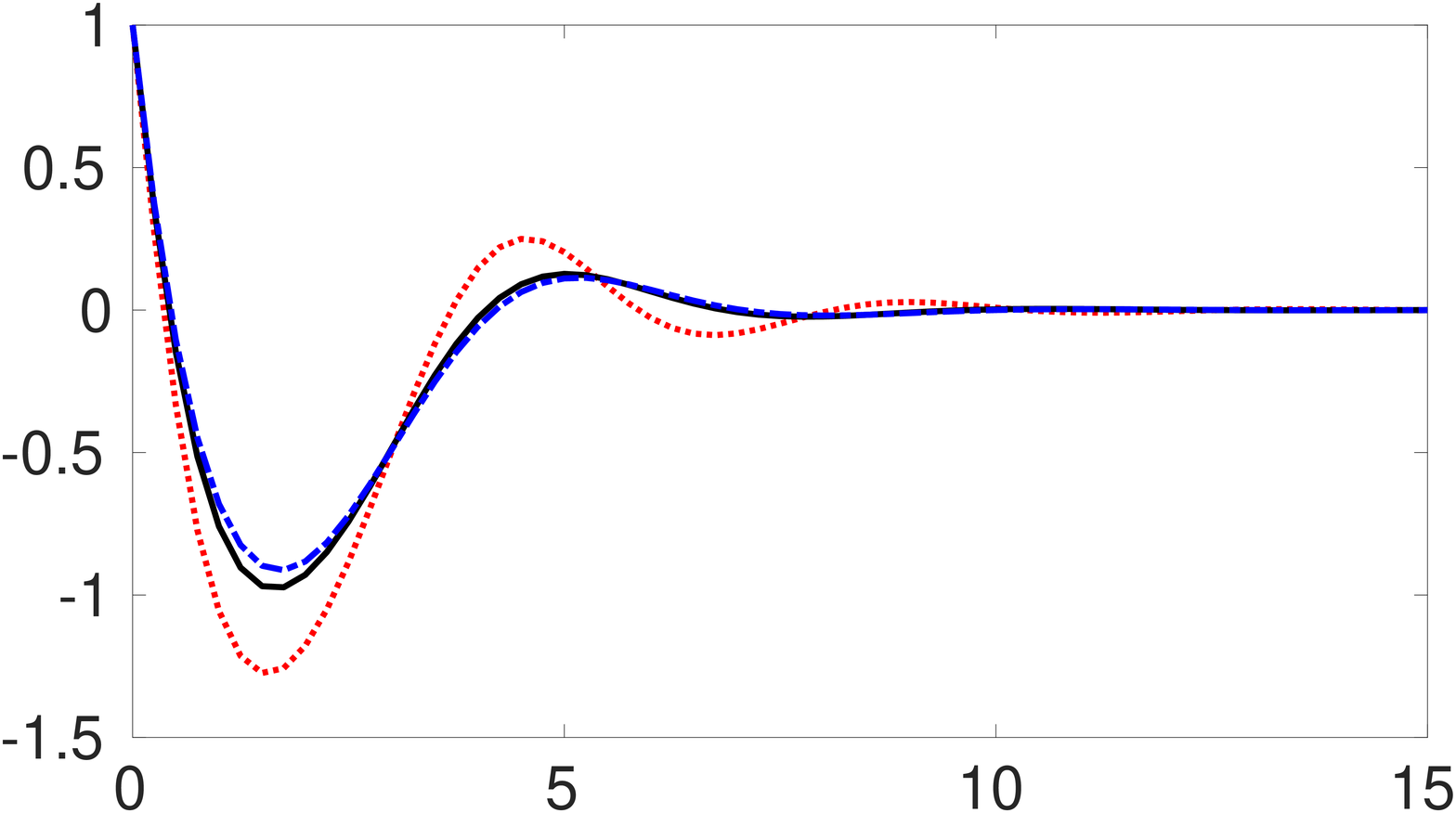} 
\caption{Position $q$ (left)  and momentum $p$ (right) with respect to time for the numerical reference solution (red, dotted),
  the minimizing-movements scheme (solid), and the Euler scheme (blue, dash-dotted).}
\label{q1}
\end{figure}
%

\begin{figure}[ht]
\centering
\includegraphics[width=60mm]{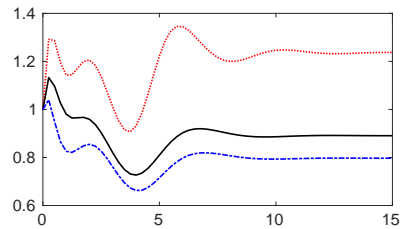} 
\caption{Temperature with respect to time for the numerical reference solution (red, dotted),
  the minimizing-movements scheme (solid), and the Euler scheme (blue, dash-dotted).}
\end{figure}


\begin{figure}[ht]
\centering
\includegraphics[width=60mm]{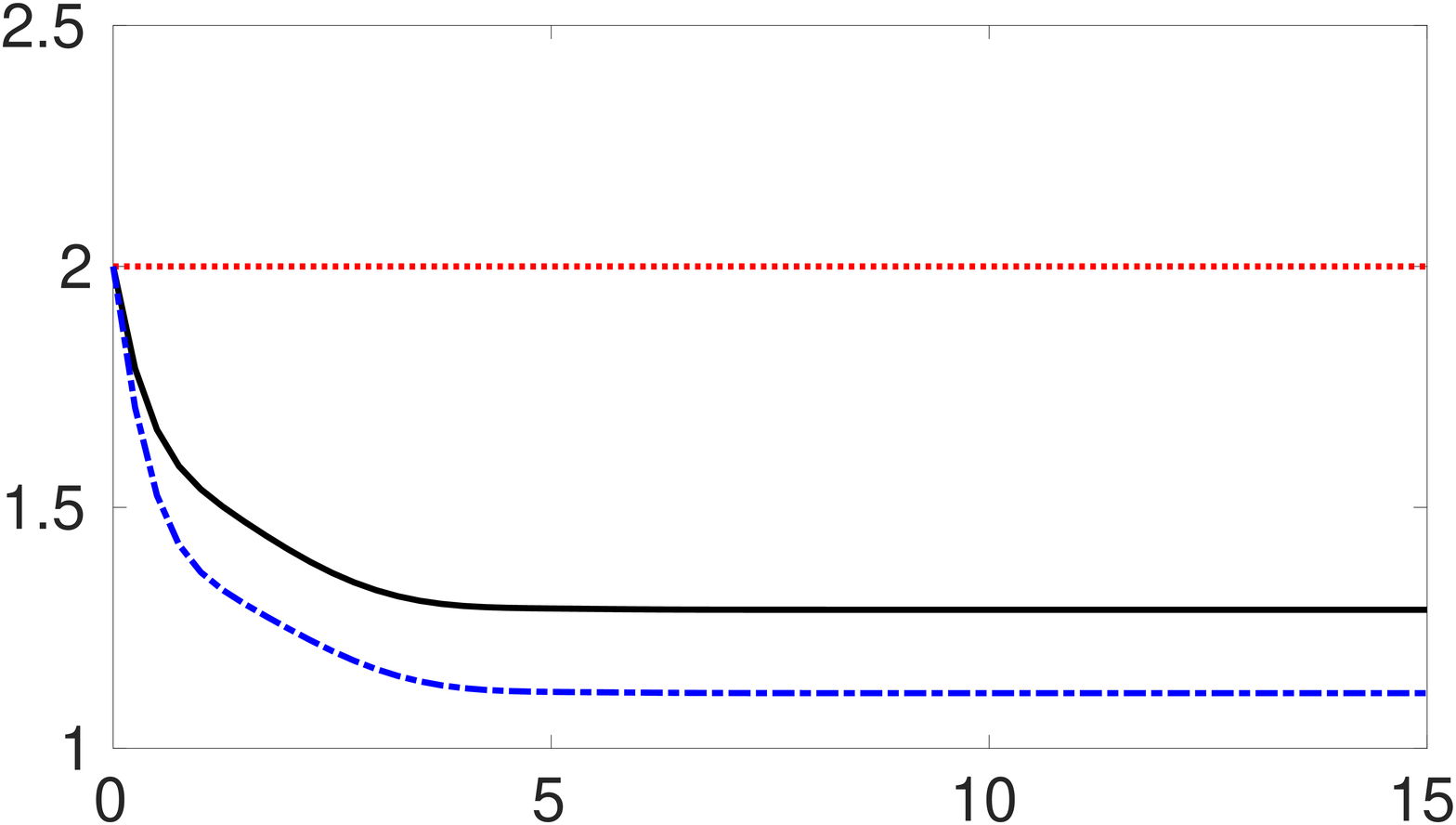} 
\includegraphics[width=60mm]{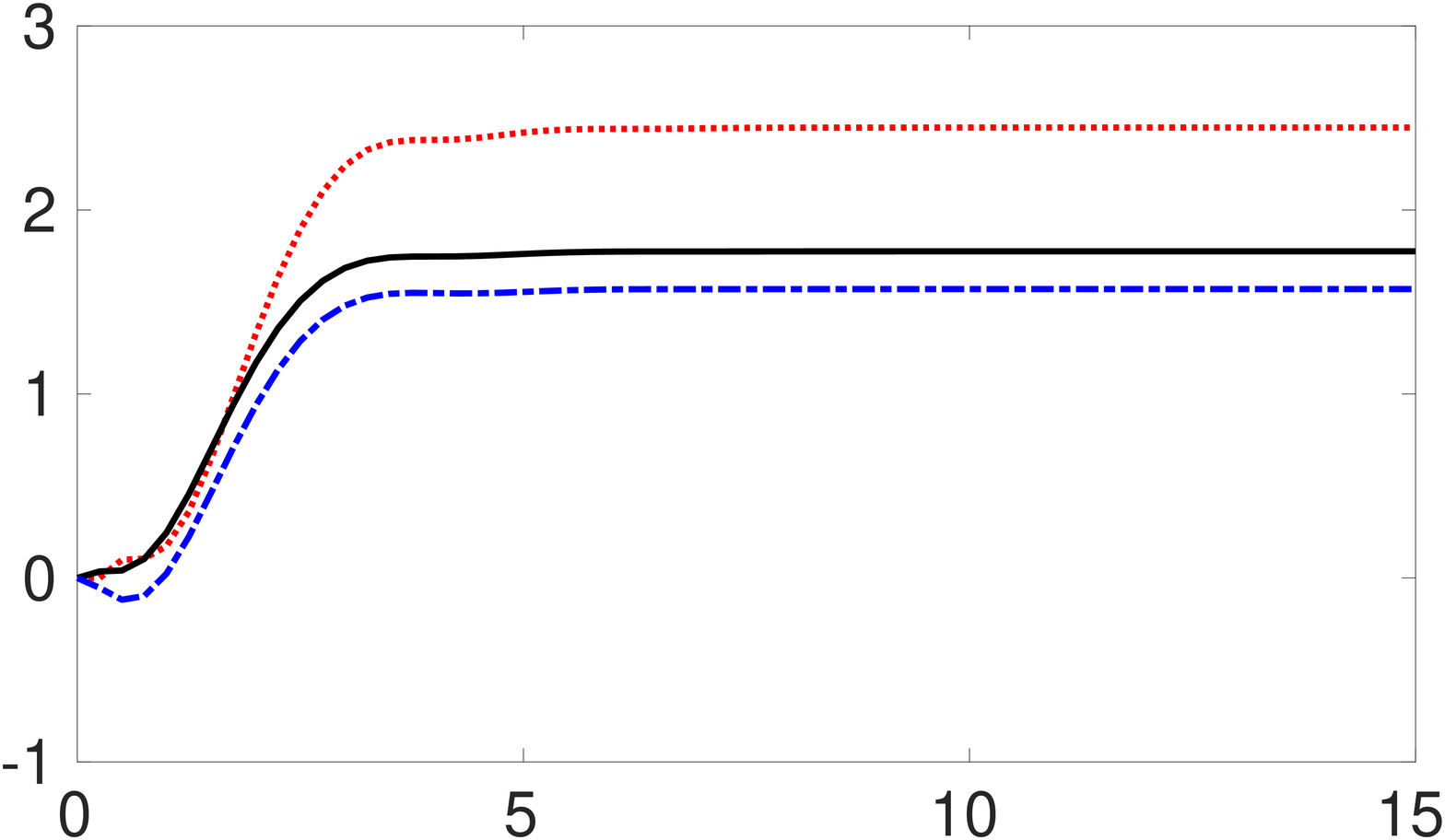} 
\caption{Energy (left) and entropy (right) as function of time for the numerical reference solution (red dotted),
  the minimizing-movements scheme (solid), and the Euler scheme (blue, dash-dotted).}
\label{entropy1}
\end{figure}

A second set of experiments is illustrated in Figure \ref{error}. For
the same choices in \eqref{data} and different time steps 
\begin{equation}\label{times}
\tau^n = 2^{-n} \quad \text{for} \ \ n =-1,0,\dots,11,
\end{equation}
we
compute the uniform errors of the temperature component of the solution and of the energy with
respect to the numerical reference solution.


\begin{figure}[ht]
\centering
\includegraphics[width=60mm]{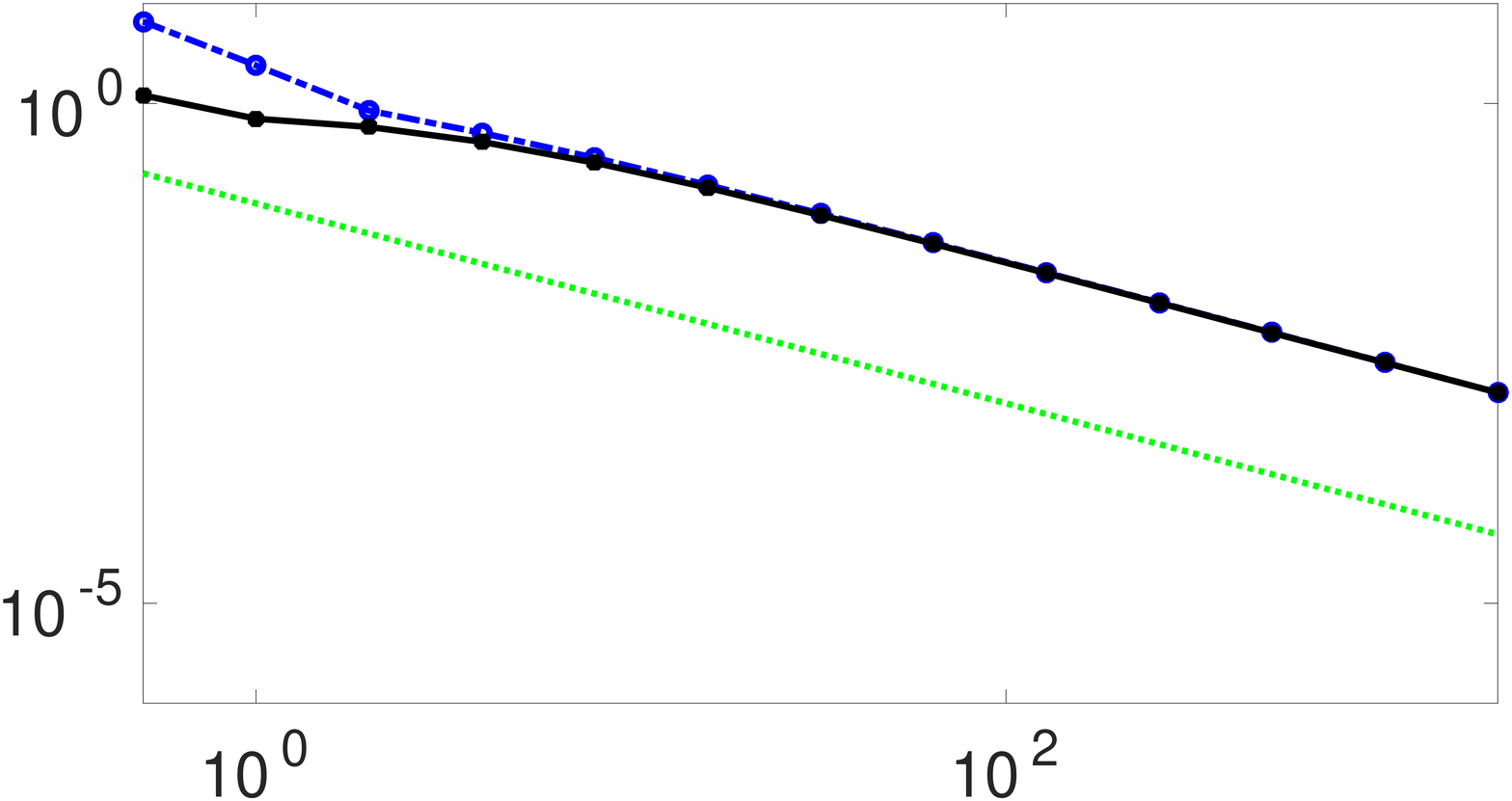} 
\includegraphics[width=60mm]{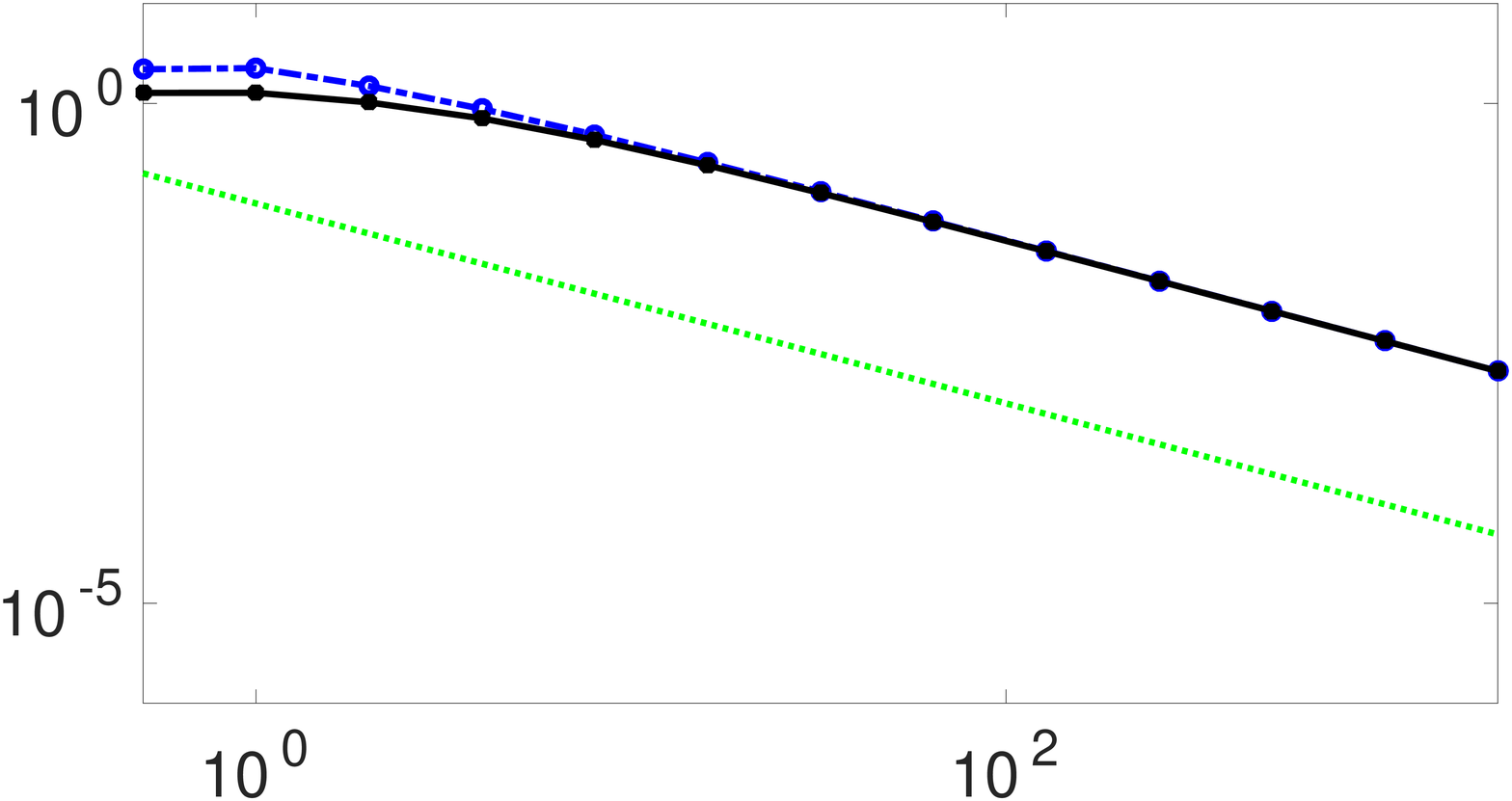} 
\caption{ Errors with respect to
  $1/\tau^n$ from \eqref{times}, logarithmic scales for the variable
  $\theta$ (on the left) and the energy (on the right). The  (green) dotted lines indicate the order of convergence $1$.
  Left:  $\max_{t\in [0,T]}|\theta(t)-\haz \theta (t)|$
  (solid), $\max_{t\in [0,T]}|\theta (t)-\haz \theta^{\,e}(t)|$ (blue, dash-dotted); right: $\max_{t\in [0,T]}| E(\haz
  y(t)) - E(y^0)|$ (solid), and  $\max_{t\in [0,T]}| E(\haz
  y^{\,e}(t))  - E(y^0)|$ (blue, dash-dotted). }
\label{error}
\end{figure}

As $\tau$ converges to $0$, our computations confirm that both the minimizing-move\-ments and the Euler
scheme are of order $\tau$, see also \eqref{errore}. Let us mention
that a proof of first-order convergence for the minimizing-movements scheme in the nondissipative
regime ($L=0$, $K$ independent of the state) is given in \cite[Prop.\ 4.3]{generic_euler}. Both schemes do not conserve
energy. Still, the minimizing-movements scheme is more accurate than
the Euler scheme when the time steps are large.


\section*{Acknowledgements}  
This research is supported by Austrian Science Fund (FWF) project
F\,65 and W\,1245. 
AJ is supported by the FWF projects P\,30000 and P\,33010. 
US is supported by the FWF projects I\,4354 and P\,32788 and by the 
Vienna Science and Technology Fund (WWTF) through Project
MA14-009.  


\end{document}